\begin{document}

\newfont{\teneufm}{eufm10}
\newfont{\seveneufm}{eufm7}
\newfont{\fiveeufm}{eufm5}
%
%
\newfam\eufmfam
                \textfont\eufmfam=\teneufm \scriptfont\eufmfam=\seveneufm
                \scriptscriptfont\eufmfam=\fiveeufm
%
%
\def\frak#1{{\fam\eufmfam\relax#1}}
%


\def\bbbr{{\rm I\!R}} 
\def\bbbm{{\rm I\!M}}
\def\bbbn{{\rm I\!N}} 
\def\bbbf{{\rm I\!F}}
\def\bbbh{{\rm I\!H}}
\def\bbbk{{\rm I\!K}}
\def\bbbp{{\rm I\!P}}
\def\bbbone{{\mathchoice {\rm 1\mskip-4mu l} {\rm 1\mskip-4mu l}
{\rm 1\mskip-4.5mu l} {\rm 1\mskip-5mu l}}}
\def\bbbc{{\mathchoice {\setbox0=\hbox{$\displaystyle\rm C$}\hbox{\hbox
to0pt{\kern0.4\wd0\vrule height0.9\ht0\hss}\box0}}
{\setbox0=\hbox{$\textstyle\rm C$}\hbox{\hbox
to0pt{\kern0.4\wd0\vrule height0.9\ht0\hss}\box0}}
{\setbox0=\hbox{$\scriptstyle\rm C$}\hbox{\hbox
to0pt{\kern0.4\wd0\vrule height0.9\ht0\hss}\box0}}
{\setbox0=\hbox{$\scriptscriptstyle\rm C$}\hbox{\hbox
to0pt{\kern0.4\wd0\vrule height0.9\ht0\hss}\box0}}}}
\def\bbbq{{\mathchoice {\setbox0=\hbox{$\displaystyle\rm
Q$}\hbox{\raise 0.15\ht0\hbox to0pt{\kern0.4\wd0\vrule
height0.8\ht0\hss}\box0}} {\setbox0=\hbox{$\textstyle\rm
Q$}\hbox{\raise 0.15\ht0\hbox to0pt{\kern0.4\wd0\vrule
height0.8\ht0\hss}\box0}} {\setbox0=\hbox{$\scriptstyle\rm
Q$}\hbox{\raise 0.15\ht0\hbox to0pt{\kern0.4\wd0\vrule
height0.7\ht0\hss}\box0}} {\setbox0=\hbox{$\scriptscriptstyle\rm
Q$}\hbox{\raise 0.15\ht0\hbox to0pt{\kern0.4\wd0\vrule
height0.7\ht0\hss}\box0}}}}
\def\bbbt{{\mathchoice {\setbox0=\hbox{$\displaystyle\rm
T$}\hbox{\hbox to0pt{\kern0.3\wd0\vrule height0.9\ht0\hss}\box0}}
{\setbox0=\hbox{$\textstyle\rm T$}\hbox{\hbox
to0pt{\kern0.3\wd0\vrule height0.9\ht0\hss}\box0}}
{\setbox0=\hbox{$\scriptstyle\rm T$}\hbox{\hbox
to0pt{\kern0.3\wd0\vrule height0.9\ht0\hss}\box0}}
{\setbox0=\hbox{$\scriptscriptstyle\rm T$}\hbox{\hbox
to0pt{\kern0.3\wd0\vrule height0.9\ht0\hss}\box0}}}}
\def\bbbs{{\mathchoice
{\setbox0=\hbox{$\displaystyle     \rm S$}\hbox{\raise0.5\ht0\hbox
to0pt{\kern0.35\wd0\vrule height0.45\ht0\hss}\hbox
to0pt{\kern0.55\wd0\vrule height0.5\ht0\hss}\box0}}
{\setbox0=\hbox{$\textstyle        \rm S$}\hbox{\raise0.5\ht0\hbox
to0pt{\kern0.35\wd0\vrule height0.45\ht0\hss}\hbox
to0pt{\kern0.55\wd0\vrule height0.5\ht0\hss}\box0}}
{\setbox0=\hbox{$\scriptstyle      \rm S$}\hbox{\raise0.5\ht0\hbox
to0pt{\kern0.35\wd0\vrule height0.45\ht0\hss}\raise0.05\ht0\hbox
to0pt{\kern0.5\wd0\vrule height0.45\ht0\hss}\box0}}
{\setbox0=\hbox{$\scriptscriptstyle\rm S$}\hbox{\raise0.5\ht0\hbox
to0pt{\kern0.4\wd0\vrule height0.45\ht0\hss}\raise0.05\ht0\hbox
to0pt{\kern0.55\wd0\vrule height0.45\ht0\hss}\box0}}}}
\def\bbbz{{\mathchoice {\hbox{$\sf\textstyle Z\kern-0.4em Z$}}
{\hbox{$\sf\textstyle Z\kern-0.4em Z$}} {\hbox{$\sf\scriptstyle
Z\kern-0.3em Z$}} {\hbox{$\sf\scriptscriptstyle Z\kern-0.2em
Z$}}}}
\def\ts{\thinspace}

\newtheorem{theorem}{Theorem}
\newtheorem{lemma}[theorem]{Lemma}
\newtheorem{claim}[theorem]{Claim}
\newtheorem{cor}[theorem]{Corollary}
\newtheorem{prop}[theorem]{Proposition}
\newtheorem{definition}{Definition}
\newtheorem{question}[theorem]{Open Question}

\def\squareforqed{\hbox{\rlap{$\sqcap$}$\sqcup$}}
\def\qed{\ifmmode\squareforqed\else{\unskip\nobreak\hfil
\penalty50\hskip1em\null\nobreak\hfil\squareforqed
\parfillskip=0pt\finalhyphendemerits=0\endgraf}\fi}

\def\cA{{\mathcal A}}
\def\cB{{\mathcal B}}
\def\cC{{\mathcal C}}
\def\cD{{\mathcal D}}
\def\cE{{\mathcal E}}
\def\cF{{\mathcal F}}
\def\cG{{\mathcal G}}
\def\cH{{\mathcal H}}
\def\cI{{\mathcal I}}
\def\cJ{{\mathcal J}}
\def\cK{{\mathcal K}}
\def\cL{{\mathcal L}}
\def\cM{{\mathcal M}}
\def\cN{{\mathcal N}}
\def\cO{{\mathcal O}}
\def\cP{{\mathcal P}}
\def\cQ{{\mathcal Q}}
\def\cR{{\mathcal R}}
\def\cS{{\mathcal S}}
\def\cT{{\mathcal T}}
\def\cU{{\mathcal U}}
\def\cV{{\mathcal V}}
\def\cW{{\mathcal W}}
\def\cX{{\mathcal X}}
\def\cY{{\mathcal Y}}
\def\cZ{{\mathcal Z}}

\def\nrp#1{\left\|#1\right\|_p}
\def\nrq#1{\left\|#1\right\|_m}
\def\nrqk#1{\left\|#1\right\|_{m_k}}
\def\Ln#1{\mbox{\rm {Ln}}\,#1}
\def\nd{\hspace{-1.2mm}}
\def\ord{{\mathrm{ord}}}
\def\Cc{{\mathrm C}}
\def\Pb{\,{\mathbf P}}

\def\va{{\mathbf{a}}}

\newcommand{\comm}[1]{\marginpar{%
\vskip-\baselineskip 
\raggedright\footnotesize
\itshape\hrule\smallskip#1\par\smallskip\hrule}}




\newcommand{\ignore}[1]{}

\def\vec#1{\mathbf{#1}}

\def\e{\mathbf{e}}



\def\GL{\mathrm{GL}}

\hyphenation{re-pub-lished}

\def\rank{{\mathrm{rk}\,}}
\def\dd{{\mathrm{dyndeg}\,}}

\def\A{\mathbb{A}}
\def\B{\mathbf{B}}
\def \C{\mathbb{C}}
\def \F{\mathbb{F}}
\def \K{\mathbb{K}}
\def \Z{\mathbb{Z}}
\def \P{\mathbb{P}}
\def \R{\mathbb{R}}
\def \Q{\mathbb{Q}}
\def \N{\mathbb{N}}
\def \Z{\mathbb{Z}}

\def \nd{{\, | \hspace{-1.5 mm}/\,}}

\def\Zn{\Z_n}

\def\Fp{\F_p}
\def\Fq{\F_q}
\def \fp{\Fp^*}
\def\\{\cr}
\def\({\left(}
\def\){\right)}
\def\fl#1{\left\lfloor#1\right\rfloor}
\def\rf#1{\left\lceil#1\right\rceil}
\def\e{\mbox{\bf{e}}}
\def\ed{\mbox{\bf{e}}_{d}}
\def\ek{\mbox{\bf{e}}_{k}}
\def\eM{\mbox{\bf{e}}_M}
\def\emd{\mbox{\bf{e}}_{m/\delta}}
\def\eqk{\mbox{\bf{e}}_{m_k}}
\def\ep{\mbox{\bf{e}}_p}
\def\eps{\varepsilon}
\def\er{\mbox{\bf{e}}_{r}}
\def\et{\mbox{\bf{e}}_{t}}
\def\Kc{\,{\mathcal K}}
\def\Ic{\,{\mathcal I}}
\def\Bc{\,{\mathcal B}}
\def\Rc{\,{\mathcal R}}

\setlength{\textheight}{43pc}
\setlength{\textwidth}{28pc}

\title[Polynomial Dynamics and Nonlinear Pseudorandom Numbers]
{Multivariate Permutation Polynomial Systems and Nonlinear Pseudorandom Number Generators}

\author{Alina~Ostafe}
\address{Institut f\"ur Mathematik, Universit\"at Z\"urich,
Winterthurerstrasse 190 CH-8057, Z\"urich, Switzerland}
\email{alina.ostafe@math.uzh.ch}


\begin{abstract}In this paper we study a class 
of dynamical systems generated by iterations of multivariate permutation polynomial systems  
which lead to polynomial 
growth of the degrees of these iterations. Using these estimates and the same techniques studied previously for inversive generators, we 
bound exponential sums along the 
orbits of these dynamical systems and show that they admit 
much stronger estimates ``on average''
over all initial values 
$\vec{v} \in \F_p^{m+1}$
than in the general case and thus can be of use for pseudorandom number 
generation. 
  \end{abstract}

\keywords{Pseudorandom number generators, permutation polynomials, discrepancy}
\maketitle


\section{Introduction}

Let $\cF = \{f_0, \ldots ,f_{m}\}$ be a system of $m+1$ polynomials in $m+1$ variables over an arbitrary field.  One can naturally define a  dynamical system
generated by its iterations, 
see~\cite{EvWa,Silv1}
and references therein 
for various aspects of such dynamical systems, and consider the orbits obtained by such 
iterations evaluated at a certain initial value 
$\(v_{0}, \ldots, v_{m}\)$. 
The statistical uniformity of the distribution 
(measured by the discrepancy) of one and  multidimensional 
nonlinear polynomial generators over a finite field 
have been studied in~\cite{GNS,GG,NiSh4,NiWi,TopWin}. However, almost all
previously known results are nontrivial only for those
polynomial generators
that produce sequences of extremely large period, which could
be hard to achieve in practice (the only known exceptions are generators
from  inversions~\cite {NiSh3}, power functions~\cite{FrSh},
Dickson polynomials~\cite{GoGuSh} and Redei functions~\cite{GutWin}). The reason
behind this is that typically the degree of iterated polynomial
systems grows exponentially, and that in all previous results
the saving over the trivial bound has been logarithmic.
Furthermore, it is easy to see that in the one-dimensional
case (that is, for $m=0$) the exponential growth of 
the degree of iterations of a nonlinear polynomial is
unavoidable. One also expects the same behaviour 
in the multidimensional case for ``random'' polynomials
$f_0, \ldots ,f_{m}$. However, as we saw in~\cite{OS}, for some specially selected
polynomials $f_0, \ldots ,f_{m}$ the degree may grow 
significantly slower.

In~\cite{OS} we describe a rather wide class of 
polynomial systems with polynomial growth of the degree
of their iterations. As a result we obtain much better estimates
of exponential sums, and thus of the discrepancy, for vectors
generated by these iterations
(after scaling them to the unit cube),
with a saving over
the trivial bound  being a power of $p$.

Obtaining stronger results ``on average''
over all initial values 
$\vec{v} \in \F_p^{m+1}$ 
is an interesting and challenging 
question. We remark that in the case of the so-called inversive generator rather stronger estimates ``on average" are available (see~\cite{NiSh3}) and also estimates for the average distribution of powers and primitive elements of the inversive generators are considered in~\cite{CW}. In this paper we study this problem by following the same arguments introduced for the inversive generator in~\cite{NiSh3}. For this we define a special family of multivariate polynomial
systems of~\cite{OS}, which beside the polynomial degree growth also leads to {\it permutation polynomial systems\/}. In turn this allows us to use the approach of~\cite{NiSh3} to obtain a
stronger bound on the discrepancy ``on average" over initial values.

Furthermore, here we exploit  the special structure of iterations of the polynomial
systems of~\cite{OS} that allows us to replace the use of the Weil
bound (see~\cite[Chapter~5]{LN}) 
by a more elementary and stronger estimate on the corresponding 
exponential sums which in turn 
leads to a better final result and for more general
systems of congruences.   
In fact, since our construction 
can easily be extended to polynomials over commutative rings,
the new estimate can also be used to study polynomials
maps over residue rings (while the Weil bound does not apply
there). 
This estimate 
can also be used to improve and generalise the main result of~\cite{OS}.

Finally, we note that we also hope that our results may be of use 
for some applications in polynomial dynamical systems. 

Throughout the paper, the implied constants in the symbols `$O$' and `$\ll$'
may occasionally, where obvious, depend on
some integer parameter $s\ge 1$ and are absolute otherwise. We recall that the notations $A = O(B )$ and $A\ll B$ are all equivalent to the assertion that the inequality $|A|\leq c|B|$ holds for some
constant $c > 0$.

\section{Permutation Polynomial Dynamical System with Slow Degree Growth}
\label{sec:sec2}
\subsection{General construction}

We recall and modify 
the construction of~\cite{OS} of multivariate 
polynomial systems with slow degree growth. Let  $\F$ be an arbitrary field  
and let the polynomials $g_i,h_i\in\F[X_{i+1}, \ldots ,X_m]$, $i=0,\cdots,m-1$, satisfying the following conditions: each polynomial $g_i$ 
has a {\it unique leading monomial\/} $X_{i+1}^{s_{i,i+1}}\ldots X_m^{s_{i,m}}$, that is,
\begin{equation}
\label{eq:Cond1}
g_i(X_{i+1},\ldots,X_m) = X_{i+1}^{s_{i,i+1}}\ldots X_m^{s_{i,m}} + 
\widetilde{g_i}(X_{i+1},\ldots,X_m), 
\end{equation}
where 
\begin{equation}
\label{eq:Cond2}
\deg_{X_j} \widetilde{g_i} <  s_{i,j}, 
\qquad \deg_{X_j} h_i\le  s_{i,j},
\end{equation}
for $i=0,\ldots,m-1$, $j = i+1, \ldots, m$.

Throughout, we use $\deg$ to denote the total degree of a multivariate polynomial.

We construct now a system $\cF = \{f_0, \ldots ,f_m\}$
of $m+1$ polynomials in 
the ring 
$\F[X_0, \ldots ,X_m]$ defined in the following way:
\begin{equation}
\label{eq:Polys}
\begin{split}
f_0(X_0, \ldots ,X_m)& = X_0g_0(X_1,\ldots,X_m)+h_0(X_1,\ldots,X_m),\\
f_1(X_0, \ldots ,X_m)&=X_1g_1(X_2,\ldots,X_m)+h_1(X_2,\ldots,X_m),\\
  &\ldots  \\
f_{m-1}(X_0, \ldots ,X_m)&=X_{m-1}g_{m-1}(X_m)+h_{m-1}(X_m),\\
f_m(X_0, \ldots ,X_m) &=aX_m+b,
\end{split}
\end{equation}
where $$
a,b\in\F,\quad a \ne 0,\quad \textrm{and} \quad g_i,h_i\in\F[X_{i+1},\ldots,X_m], \quad i=0,\ldots,m-1,
$$ 
are defined as above.

For each $i=0, \ldots ,m$ we define the $k$-th iteration of the polynomials $f_i$ by the recurrence relation
\begin{equation}
\label{eq:PolyIter}
f_i^{(0)}=X_i, \qquad f_i^{(k)}= f_i(f_0^{(k-1)}, \ldots ,f_m^{(k-1)}), \qquad k=0,1, \ldots\,  .
\end{equation}
The following result shows the exact form of the polynomials $f_i^{(k)}$ and also the polynomial growth of the degrees of the polynomials $X_ig_i$, $i=0,\ldots,m$, under iterations.

\begin{lemma}
\label{lem:LinTerm+Deg} Let $f_0, \ldots, f_m\in \F[X_0,\ldots,X_m]$ be  as in~\eqref{eq:Polys},
satisfying the conditions~\eqref{eq:Cond1} and \eqref{eq:Cond2}. 
Then for the polynomials $f_i^{(k)}$, $k=1,2,\ldots$, given by~\eqref{eq:PolyIter} we have
$$
f_i^{(k)} = X_i g_{i,k} (X_{i+1},\ldots,X_m) + h_{i,k}(X_{i+1},\ldots,X_m)
$$
where $ g_{i,k}, h_{i,k} \in \F[X_{i+1},\ldots,X_m]$ and
\begin{eqnarray*}
\deg g_{i,k}&=&\frac{1}{(m-i)!}k^{m-i}s_{i,i+1}\ldots s_{m-1,m}+\psi_i(k),\qquad  i=0,\ldots, m-1,\\
\deg g_{m,k}&=&0,
\end{eqnarray*}
where $\psi_i(T) \in \Q[T]$ is a polynomial of degree  $\deg \psi_i <m-i$.
\end{lemma}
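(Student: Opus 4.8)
The plan is to combine an induction on the iteration index $k$, which pins down the shape of $f_i^{(k)}$, with a downward induction on $i$, which solves the resulting degree recurrence. First I would establish the structural claim, namely that $f_i^{(k)}$ lies in $\F[X_i,\ldots,X_m]$ and is affine-linear in $X_i$ with coefficient depending only on $X_{i+1},\ldots,X_m$. For the bottom row, $f_m^{(k)}=a^k X_m+b(a^{k-1}+\cdots+1)$, so $g_{m,k}=a^k$ and $\deg g_{m,k}=0$. For $i<m$, substituting \eqref{eq:PolyIter} into $f_i(Y_0,\ldots,Y_m)=Y_i g_i(Y_{i+1},\ldots,Y_m)+h_i(Y_{i+1},\ldots,Y_m)$ gives
\begin{equation*}
f_i^{(k)} = f_i^{(k-1)}\, g_i\bigl(f_{i+1}^{(k-1)},\ldots,f_m^{(k-1)}\bigr) + h_i\bigl(f_{i+1}^{(k-1)},\ldots,f_m^{(k-1)}\bigr).
\end{equation*}
By the inductive hypothesis each $f_j^{(k-1)}$ with $j\ge i+1$ involves only $X_{i+1},\ldots,X_m$, while $f_i^{(k-1)}=X_i g_{i,k-1}+h_{i,k-1}$ with $g_{i,k-1},h_{i,k-1}\in\F[X_{i+1},\ldots,X_m]$; reading off the coefficient of $X_i$ yields
\begin{equation*}
g_{i,k} = g_{i,k-1}\, g_i\bigl(f_{i+1}^{(k-1)},\ldots,f_m^{(k-1)}\bigr), \qquad h_{i,k}=h_{i,k-1}\, g_i(\cdots)+h_i(\cdots),
\end{equation*}
which confirms the asserted form (the base case $k=1$ being $g_{i,1}=g_i$, $h_{i,1}=h_i$).

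Second, I would compute $\deg g_i(f_{i+1}^{(k-1)},\ldots,f_m^{(k-1)})$, where the unique-leading-monomial hypothesis \eqref{eq:Cond1}--\eqref{eq:Cond2} is essential: since every monomial of $\widetilde{g_i}$ has $X_j$-degree strictly below $s_{i,j}$ for all $j$, and each $f_j^{(k-1)}$ has degree at least $1$, the substituted leading monomial $X_{i+1}^{s_{i,i+1}}\cdots X_m^{s_{i,m}}$ strictly dominates all other terms, with no cancellation because $\F$ is a field. Writing $d_{i,k}=\deg g_{i,k}$, I would run in parallel the auxiliary invariant $\deg h_{i,k}\le d_{i,k}$, which guarantees that the leading term of $f_j^{(k-1)}=X_j g_{j,k-1}+h_{j,k-1}$ comes from $X_j g_{j,k-1}$, so that $\deg f_j^{(k-1)}=1+d_{j,k-1}$. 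Taking degrees in the product recurrence for $g_{i,k}$ then gives
\begin{equation*}
d_{i,k}=d_{i,k-1}+\sum_{j=i+1}^{m}s_{i,j}\bigl(1+d_{j,k-1}\bigr),\qquad d_{m,k}=0,
\end{equation*}
and the same estimate, now using $\deg_{X_j}h_i\le s_{i,j}$, preserves $\deg h_{i,k}\le d_{i,k}$.

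Finally, I would solve this recurrence by downward induction on $i$, starting from $d_{m,k}=0$ and $d_{i,0}=0$. Assuming $d_{j,k}=\tfrac{1}{(m-j)!}k^{m-j}\prod_{l=j}^{m-1}s_{l,l+1}+(\text{lower order in }k)$ for every $j>i$, the dominant contribution to the first difference $d_{i,k}-d_{i,k-1}$ comes from the $j=i+1$ term, producing a polynomial in $k$ of degree $m-i-1$ with leading coefficient $\tfrac{1}{(m-i-1)!}\prod_{l=i}^{m-1}s_{l,l+1}$; summing over $k$ raises the degree by one and divides the leading coefficient by $m-i$, yielding exactly $\tfrac{1}{(m-i)!}k^{m-i}\prod_{l=i}^{m-1}s_{l,l+1}$ together with a correction $\psi_i\in\Q[T]$ of degree $<m-i$. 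The step I expect to be most delicate is the degree control in the middle: one must run the induction on $k$ (for the shape and the invariant $\deg h_{i,k}\le d_{i,k}$) and the downward induction on $i$ (for leading-monomial domination) simultaneously, since the degree of the composition $g_i(f_{i+1}^{(k-1)},\ldots,f_m^{(k-1)})$ can only be read off once one already knows that the higher-index iterates suffer no leading-term cancellation.
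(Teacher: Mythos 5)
Your proposal is correct, and for the structural half it coincides with the paper's argument: the paper writes the same recursion $f_i^{(k)} = f_i^{(k-1)}g_i\bigl(f_{i+1}^{(k-1)},\ldots,f_m^{(k-1)}\bigr) + h_i\bigl(f_{i+1}^{(k-1)},\ldots,f_m^{(k-1)}\bigr)$ and notes that an easy induction yields $f_i^{(k)} = X_ig_{i,k}+h_{i,k}$ together with the invariant $\deg g_{i,k}\ge \deg h_{i,k}$ (your $\deg h_{i,k}\le d_{i,k}$). The genuine difference is in the degree formula: the paper does not prove it here at all, but cites Lemma~1 of~\cite{OS}, where it appears in the equivalent form $\deg f_i^{(k)} = \deg g_{i,k}+1$; you instead derive it from scratch, via the exact recurrence $d_{i,k}=d_{i,k-1}+\sum_{j=i+1}^m s_{i,j}(1+d_{j,k-1})$, $d_{m,k}=0$, which follows from the unique-leading-monomial conditions~\eqref{eq:Cond1}--\eqref{eq:Cond2} (these prevent cancellation of the top-degree term under substitution), and then solve it by downward induction on $i$ with a summation over $k$. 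What your route buys is a self-contained proof that makes visible exactly where~\eqref{eq:Cond1},~\eqref{eq:Cond2} and the auxiliary invariant on $\deg h_{i,k}$ are used; the paper's route is shorter but opaque at precisely this point. One small point of hygiene: to conclude that $\psi_i$ is genuinely a polynomial in $\Q[T]$, and not merely that the error term is of lower order, you should carry as the inductive hypothesis that $d_{j,k}$ is \emph{exactly} a polynomial in $k$ for every $j>i$; since your recurrence is exact and $\sum_{r=1}^k P(r)$ is a polynomial in $k$ whenever $P$ is a polynomial, this is immediate, but it deserves to be stated.
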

\begin{proof}
We have
$$
f_i^{(k)}  = f_i^{(k-1)} g_i\(f_{i+1}^{(k-1)}, \ldots, f_{m}^{(k-1)}\)
+h_i\(f_{i+1}^{(k-1)}, \ldots, f_{m}^{(k-1)}\).
$$
Thus an easy inductive argument implies that 
$$
f_i^{(k)} = X_i g_{i,k}  (X_{i+1},\ldots,X_m) + 
h_{i,k} (X_{i+1},\ldots,X_m)
$$
for some polynomials $g_{i,k}, h_{i,k}  \in \F[X_{i+1},\ldots,X_m]$, 
with $\deg g_{i,k} \ge \deg h_{i,k}$, 
where  $i =0, \ldots, m$,  $k=1, 2,\ldots$. 

For the asymptotic formulas for the degrees of the polynomials $g_{i,k}$ 
see~\cite[Lemma~1]{OS} where it is given in the equivalent form 
for  $\deg f_i^{(k)}= \deg g_{i,k} + 1$.
\end{proof}

   \subsection{Permutation polynomial systems}
   
In order to be able to apply the technique introduced in~\cite{NiSh3} for inversive pseudorandom number generators, we need to work with systems of multivariate polynomials in $\Fp[X_0,\ldots,X_m]$ which induce maps that permute the elements of $\Fp^{m+1}$. Lidl and Niederreiter~\cite{LN, LN1} call such  systems 
{\it orthogonal polynomial systems\/}, but we here refer to
them as {\it permutation polynomial systems\/}.
      
Let the polynomial system $\cF = \{f_0, \ldots ,f_m\}$, $m\geq 1$, be defined by~\eqref{eq:Polys}
 and satisfy the conditions~\eqref{eq:Cond1} and \eqref{eq:Cond2}. It is obvious that this system is a permutation system if and only if the polynomials $g_i$, $i=0,\ldots,m$, 
 do not have zeros over $\Fp$. 

We note that a ``typical'' absolute irreducible polynomial in $m\ge 2$ variables 
over $\Fp$ always has lots of zeros. By a special case of the Lang-Weil theorem~\cite{LaWe} a polynomial $F$ in $m\ge 2$ variables over $\Fp$ always has $rp^{m-1}+O(p^{m-3/2})$ zeros where $r$ is the number of absolutely irreducible factors of $F$ (with the implied constant depending only on $\deg F$), see also~\cite{Sch}. That is why we seek ``atypical'' polynomials, as the example below shows.

One of the attractive choices of polynomials 
which would lead to a fast PRNG is
$$g_i(X_{i+1},\ldots,X_m) = \prod_{j=1}^{m-i} (X_{i+j}^2-a_{i,j})$$
and
$$  h_i(X_{i+1},\ldots,X_m) = b_{i}
$$
where $a_{i,j}$ are quadratic nonresidues and $b_i$ are any constants in $\Fp$.

Even simpler, one can take 
$$g_i(X_{i+1},\ldots,X_m) =   (X_{i+1}^2-a_i) $$
where $a_{i}$ are quadratic nonresidues.

\section{Polynomial Pseudorandom Number Generators}

\subsection{Construction}

Let $\cF=\{f_0,\ldots,f_m\}$ be a permutation polynomial system in $\F_p[X_0,\ldots,X_m]$ defined as in Section~\ref{sec:sec2}. 
We fix a vector $\vec{v} \in \F_p^{m+1}$ and
  consider the sequence defined by a recurrence congruence
modulo a prime $p$ of the form
\begin{equation}
\label{eq:Gen}
u_{n+1,i}\equiv f_i(u_{n,0},\ldots,u_{n,m})\!\!\! \pmod p, \qquad n =
0,1,\ldots,
\end{equation}
with the {\it initial values\/}
$(u_{0,0},\ldots,u_{0,m}) = \vec{v}$. 
We also assume that $0 \le u_{n,i} < p$, $i=0,\ldots,m$, $n=0, 1, \ldots$.

In particular, for any $n,k\ge 0$ and $i=0,\ldots,m$ we have
\begin{equation}
\label{eq:vecf}
u_{n+k,i}(\vec{v})=f_i^{(k)}(u_{n,0}(\vec{v}),\ldots,u_{n,m}(\vec{v})).
\end{equation}
Using the following vector notation
$$
\vec{u}_n(\vec{v})=(u_{n,0}(\vec{v}),\ldots,u_{n,m-1}(\vec{v}))
$$
we have the recurrence relation
$$
\vec{u}_{n+k}(\vec{v})=(f_0^{(k)}(u_{n,0}(\vec{v}),\ldots,u_{n,m}(\vec{v})),\ldots,f_{m-1}^{(k)}(u_{n,0}(\vec{v}),\ldots,u_{n,m}(\vec{v}))).
$$

We show that for almost all initial values 
$\vec{v} \in \F_p^{m+1}$, 
the sequence 
\begin{equation}
\label{eq:seq}
\(\frac{u_{n,0}(\vec{v})}{p}, \ldots, 
\frac{u_{n,m-1}(\vec{v})}{p}\),
\qquad n = 0,\ldots, N-1,
\end{equation}
is uniformly distributed for all $N\geq (\log p)^{2 + \varepsilon}$,
 any fixed $\varepsilon>0$ and sufficiently large~$p$.

\subsection{Exponential Sums}
\label{sec:ExpSum}

We put
$$\e_m(z) = \exp(2 \pi i z/m).$$

Our second main tool is the following bound on exponential sums which is stronger than the one immediately implied by the Weil bound
(see~\cite[Chapter~5]{LN}).


\begin{lemma}
\label{lem:Elem}
Let $f_0, \ldots, f_m\in \Fp[X_0,\ldots,X_m]$ be  as in~\eqref{eq:Polys},
 satisfying the conditions~\eqref{eq:Cond1} and \eqref{eq:Cond2}.
If $s_{0,1} \ldots s_{m-1,m} \ne 0$,  then there is a positive integer $k_0$ depending only on the degrees of the polynomials in $\cF$ such that for any integers  $k>l\geq k_0$ 
 and 
any nonzero $\vec{a} = (a_0, \ldots, a_{m-1}) \in \Fp^m$, 
for the polynomial
$$
F_{\vec{a},k,l} =  \sum_{i=0}^{m-1} \!a_i
\(f_i^{(k)}-f_i^{(l)}\), 
$$
we have
$$
\left|\sum_{x_0, \ldots, x_m =1 }^p
\ep\( F_{\vec{a}, k,l}(x_{0},\ldots,x_{m})\) \right| \ll k^m p^{m}.
$$
\end{lemma}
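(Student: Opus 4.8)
The plan is to exploit the triangular shape of the system together with the fact, recorded in Lemma~\ref{lem:LinTerm+Deg}, that each iterate is \emph{linear} in its lowest-index variable: $f_i^{(k)} = X_i g_{i,k} + h_{i,k}$ with $g_{i,k},h_{i,k}\in\Fp[X_{i+1},\ldots,X_m]$. I would first let $i^*$ be the smallest index with $a_{i^*}\ne 0$, which exists because $\vec{a}\ne 0$ and satisfies $i^*\le m-1$. Since every summand $a_i(f_i^{(k)}-f_i^{(l)})$ of $F_{\vec{a},k,l}$ with $a_i\ne 0$ involves only the variables of index $\ge i^*$, the variables $X_0,\ldots,X_{i^*-1}$ do not occur in $F_{\vec{a},k,l}$ at all, and $X_{i^*}$ occurs \emph{only} in the single term $a_{i^*}(f_{i^*}^{(k)}-f_{i^*}^{(l)})$, where it appears linearly with coefficient $a_{i^*}\bigl(g_{i^*,k}-g_{i^*,l}\bigr)(X_{i^*+1},\ldots,X_m)$.

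Carrying out the summation (over complete residue systems modulo $p$) in the order $x_0,x_1,\ldots$, the sums over $x_0,\ldots,x_{i^*-1}$ each contribute a factor $p$, while the complete sum over $x_{i^*}$ is, by orthogonality of additive characters and because $a_{i^*}\ne 0$, equal to $p$ when $D(\vec{x}):=\bigl(g_{i^*,k}-g_{i^*,l}\bigr)(x_{i^*+1},\ldots,x_m)=0$ in $\Fp$ and to $0$ otherwise. Bounding the remaining character values trivially by $1$ then gives
$$
\Biggl| \sum_{x_0, \ldots, x_m = 1}^p \ep\bigl(F_{\vec{a},k,l}(x_0, \ldots, x_m)\bigr) \Biggr| \le p^{i^*+1}\cdot \#\{\, \vec{x}\in\Fp^{m-i^*} : D(\vec{x})=0 \,\}.
$$
To finish I would invoke the standard estimate that a nonzero polynomial of total degree $d$ in $n$ variables over $\Fp$ has at most $d\,p^{n-1}$ zeros; applied with $n=m-i^*$ this yields $p^{i^*+1}\cdot\deg(D)\,p^{m-i^*-1}=\deg(D)\,p^{m}$, and since Lemma~\ref{lem:LinTerm+Deg} gives $\deg g_{i^*,k}\ll k^{m-i^*}\le k^{m}$, the claimed bound $\ll k^{m}p^{m}$ follows.

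The crux, and the step I expect to be the main obstacle, is showing that $D$ is \emph{not the zero polynomial modulo $p$}; this is precisely where the hypotheses $s_{0,1}\cdots s_{m-1,m}\ne 0$ and $k>l\ge k_0$ are used. By Lemma~\ref{lem:LinTerm+Deg} (applied over $\Fp$), $\deg g_{i^*,k}=\tfrac{1}{(m-i^*)!}\,k^{m-i^*}s_{i^*,i^*+1}\cdots s_{m-1,m}+\psi_{i^*}(k)$ with $\deg\psi_{i^*}<m-i^*$, and the leading coefficient is positive exactly because $s_{0,1}\cdots s_{m-1,m}\ne 0$, so this is strictly increasing in $k$ once $k$ exceeds a threshold. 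I would take $k_0$ to be a common such threshold for $i=0,\ldots,m-1$; as it depends only on the exponents $s_{i,j}$ and the polynomials $\psi_i$, it is determined by the degrees of $\cF$ and is independent of $p$. For $k>l\ge k_0$ this forces $\deg g_{i^*,k}>\deg g_{i^*,l}$, and because the unique leading monomial of $g_{i^*,k}$ has coefficient $1$ (hence a unit modulo every $p$, which is why the degree formula is valid over $\Fp$ uniformly in $p$), its leading term survives and cannot be cancelled by the strictly lower-degree polynomial $g_{i^*,l}$. Thus $D=g_{i^*,k}-g_{i^*,l}\not\equiv 0$ with $\deg D=\deg g_{i^*,k}\ll k^{m-i^*}$, which both legitimises the zero count above and supplies the degree bound. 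Finally I would note that no case distinction on the size of $k$ relative to $p$ is needed: the zero-count estimate $d\,p^{n-1}$ is valid for every nonzero polynomial, and whenever $k^{m}\ge p$ the asserted bound is in any case weaker than the trivial bound $p^{m+1}$.
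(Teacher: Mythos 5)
Your proposal is correct and follows essentially the same route as the paper's own proof: isolate the smallest index $i^*$ with $a_{i^*}\ne 0$, use the structure $f_i^{(k)}=X_ig_{i,k}+h_{i,k}$ from Lemma~\ref{lem:LinTerm+Deg} so that the complete sum over $x_{i^*}$ vanishes off the zero set of $g_{i^*,k}-g_{i^*,l}$, and then count those zeros via the elementary $O(Dp^{r-1})$ bound, with nontriviality of $g_{i^*,k}-g_{i^*,l}$ guaranteed by the strictly growing degree formula once $k>l\ge k_0$. Your write-up in fact spells out more explicitly than the paper why $g_{i^*,k}-g_{i^*,l}$ is nonzero modulo $p$ (positivity of the leading coefficient of the degree polynomial, uniformity in $p$), but the argument is the same.
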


\begin{proof}
Let $s \le m-1$ be the smallest integer such that $a_s\ne 0$.
By Lemma~\ref{lem:LinTerm+Deg} we have
\begin{equation*}
\begin{split}
&\sum_{x_0, \ldots, x_m =1 }^p  
\ep\( F_{\vec{a}, k,l}(x_{0},\ldots,x_{m})\) \\
 &\qquad =\sum_{x_0, \ldots, x_m =1 }^p
\ep\( \sum_{i=0}^{m-1} \!a_i\(x_i(g_{i,k}-g_{i,l})+(h_{i,k}-h_{i,l})\)\)\\
 &\qquad =p^{s}\sum_{x_s, \ldots, x_{m} =1 }^p
\ep\( \sum_{i=s}^{m-1} \!a_i\(x_i(g_{i,k}-g_{i,l})+(h_{i,k}-h_{i,l})\)\)\\
&\qquad =p^{s}\sum_{x_{s+1}, \ldots, x_{m} =1 }^p
\ep\(h_{s,k}-h_{s,l}+ \sum_{i=s+1}^{m-1} \!a_i\(x_i(g_{i,k}-g_{i,l})+(h_{i,k}-h_{i,l})\)\)\\
&\qquad\qquad\qquad \qquad \qquad\qquad\qquad
\sum_{x_s=1}^p\ep\(  a_s x_s(g_{s,k}-g_{s,l})\).
\end{split}
\end{equation*}
 Then the sum over the variable $x_s$ is nonzero only 
if its coefficient 
$$
g_{s,k}(x_{s+1},\ldots,x_m)-g_{s,l}(x_{s+1},\ldots,x_m) \equiv 0 \pmod p,
$$ 
see~\cite[Equation~(5.9)]{LN1}.

We see from   Lemma~\ref{lem:LinTerm+Deg} that if $k> l\ge k_0$
for a sufficiently large $k_0$ then $g_{s,k}-g_{s,l}$
is a nontrivial polynomial modulo $p$ of degree $O(k^{m-s})=O(k^m)$. 
A simple inductive argument shows that a nontrivial
modulo $p$ polynomial 
in $r$ variables of degree $D$ may have only $O(Dp^{r-1})$
zeros modulo $p$, which concludes the proof. 
%
%
\end{proof}

We note that we do not include the linear polynomials $f_m^{(k)}$ and $f_m^{(l)}$ 
in $F_{\vec{a},k,l}$  as generally 
speaking in this case such  a linear combination may vanish 
even for nontrivial coefficients (note that it is possible that 
$f_m^{(k)}=f_m^{(l)}$ for $k \ne l$).

We follow the  scheme previously introduced in~\cite{NiSh3} for estimating the exponential sum introduced below, and thus the discrepancy of a sequence of points.

For a vector $\vec{a} = (a_0, \ldots,
a_{m-1}) \in \Fp^m$  and
integers $c,M,N$ with $M \ge 1$ and $N \ge 1$, we introduce
$$
V_{\vec{a},c}(M,N) =  \sum_{v_{0},\ldots, v_{m}\in \F_{p}}
\left|\sum_{n=0}^{N-1}
\ep \(\sum_{j=0}^{m-1} a_j f_j^{(n)}(v_0,\ldots,v_m)\) \eM(c n)\right|^2.
$$
Note that as in Lemma~\ref{lem:Elem} we do not include polynomials
$f_m^{(n)}$ in the above exponential sum.

\begin{lemma}
\label{lem:expsum} Let  the permutation polynomial system of $m+1$ polynomials 
$\cF=\{f_0,\ldots,f_m\} \in \F_p[X_{0},\ldots,X_{m}]$ 
of total degree $d \ge 2$ of the form~\eqref{eq:Polys}, 
satisfying the conditions~\eqref{eq:Cond1} and \eqref{eq:Cond2}.
Then for any  positive integers $c,M,N$ and any nonzero vector $\vec{a} = (a_0, \ldots,
a_{m-1}) \in \Fp^m$
we have
$$
V_{\vec{a},c}(M,N)  \ll A(N,p),
$$
where
$$
A(N,p) =
\left\{ \begin{array}{ll}
N p^{m+1}  & \mbox{if}\ N \le p^{1/{(m+1)}}, \\
N^2 p^{m(m+2)/(m+1)} & \mbox{if}\ N > p^{1/{(m+1)}}.
\end{array} \right.
$$
\end{lemma}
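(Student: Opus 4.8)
The plan is to expand the square in the definition of $V_{\vec{a},c}(M,N)$ so that the inner exponential sum becomes a double sum over $n,h \in \{0,\ldots,N-1\}$, and then interchange the order of summation to bring the sum over the initial values $\vec{v} = (v_0,\ldots,v_m) \in \F_p^{m+1}$ inside. Writing out
$$
V_{\vec{a},c}(M,N) = \sum_{n,h=0}^{N-1} \eM(c(n-h)) \sum_{v_0,\ldots,v_m \in \F_p} \ep\!\(\sum_{j=0}^{m-1} a_j\(f_j^{(n)}(\vec{v}) - f_j^{(h)}(\vec{v})\)\),
$$
I recognise the inner sum over $\vec{v}$ as exactly the exponential sum controlled by Lemma~\ref{lem:Elem}, with $k = \max(n,h)$ and $l = \min(n,h)$ and the same nonzero vector $\vec{a}$. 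The polynomial there is $F_{\vec{a},k,l} = \sum_{i=0}^{m-1} a_i(f_i^{(k)} - f_i^{(l)})$, so the setup matches verbatim.

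The next step is to split the double sum according to whether the pair $(n,h)$ falls into the regime where Lemma~\ref{lem:Elem} applies. For the \emph{diagonal} terms $n = h$ the inner sum is trivially $p^{m+1}$, and there are $N$ such terms, contributing $O(N p^{m+1})$. For \emph{off-diagonal} terms with $\max(n,h) \ge k_0$ and $n \ne h$, Lemma~\ref{lem:Elem} gives the bound $\ll k^m p^m \ll N^m p^m$ for each of the $O(N^2)$ pairs. I also need to absorb the finitely many ``small'' off-diagonal pairs with $\max(n,h) < k_0$: since $k_0$ depends only on the degrees of $\cF$, there are $O(1)$ such pairs, each contributing at most $p^{m+1}$ trivially, so these are swallowed by the diagonal contribution. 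Dropping the unimodular factor $\eM(c(n-h))$ via the triangle inequality, this yields the uniform estimate
$$
V_{\vec{a},c}(M,N) \ll N p^{m+1} + N^2 \cdot N^m p^m = N p^{m+1} + N^{m+2} p^m.
$$

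The final step is to reconcile this with the claimed two-case bound $A(N,p)$, and here I anticipate the main subtlety. The naive off-diagonal estimate $N^{m+2}p^m$ above is wasteful: it is only useful when it beats the trivial bound, and the crossover with the diagonal term occurs around $N \approx p^{1/(m+1)}$. For $N \le p^{1/(m+1)}$ the diagonal term $N p^{m+1}$ dominates, giving the first case. For $N > p^{1/(m+1)}$ I expect the correct bookkeeping to be sharper than my crude $N^{m+2}p^m$, and to recover the stated $N^2 p^{m(m+2)/(m+1)}$ I will need to exploit the structure more carefully rather than bounding each $k^m$ by $N^m$. The likely device, following~\cite{NiSh3}, is to invoke Lemma~\ref{lem:Elem} only in a restricted range of $k$ and to estimate the contribution of pairs with very large $k$ by a different, completeness-type argument, or else to optimise a free parameter that splits the range of $n$. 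Pinning down exactly how the exponent $m(m+2)/(m+1)$ arises from balancing the two competing contributions is the step I expect to require the most care; the rest of the argument is the standard square-expand, diagonalise, and apply the pointwise bound routine.
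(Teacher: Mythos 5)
Your opening moves coincide with the paper's: expand the square, bound the $O(N)$ diagonal pairs trivially by $p^{m+1}$, and apply Lemma~\ref{lem:Elem} to the off-diagonal pairs (your handling of the $O(1)$ exceptional pairs with $\max(n,h)<k_0$ is a detail the paper glosses over, and you treat it correctly). This gives exactly the paper's intermediate bound $V_{\vec{a},c}(M,N)\ll Np^{m+1}+N^{m+2}p^m$, which settles the case $N\le p^{1/(m+1)}$ since there $N^{m+2}p^m\le Np^{m+1}$. But for $N>p^{1/(m+1)}$ you stop with an admission that you cannot see where the exponent $m(m+2)/(m+1)$ comes from, so the proof is genuinely incomplete, and the idea you are missing is precisely the one for which the permutation hypothesis appears in the statement at all.

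The paper's device is this: because $\cF$ is a permutation system, the map $\vec{v}\mapsto\(f_0^{(L)}(\vec{v}),\ldots,f_m^{(L)}(\vec{v})\)$ is a bijection of $\Fp^{m+1}$, so by~\eqref{eq:vecf} the quantity
$$
\sum_{v_0,\ldots,v_m\in\F_p}\left|\sum_{n=L}^{L+N-1} \ep\(\sum_{j=0}^{m-1} a_j f_j^{(n)}(v_0,\ldots,v_m)\)\eM(cn)\right|^2
$$
is independent of the starting index $L$ and equals $V_{\vec{a},c}(M,N)$. This shift-invariance is what makes your suggested ``split the range of $n$'' idea actually work: partition $[0,N-1]$ into at most $N/K+1$ blocks of length at most $K$, apply Cauchy--Schwarz to the sum over blocks, and bound the mean square of each block by the intermediate bound with $K$ in place of $N$ --- a step that is only legitimate because each block can be re-indexed to start at $n=0$. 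Without the permutation property, Lemma~\ref{lem:Elem} applied inside a block starting at $L$ would still cost up to $N^mp^m$ per pair (the iteration index, hence the degree, is large), not $K^mp^m$, and blocking gains nothing. One obtains $V_{\vec{a},c}(M,N)\ll N^2\(K^{-1}p^{m+1}+K^mp^m\)$, and the choice $K=\min\{N,\fl{p^{1/(m+1)}}\}$ balances the two terms at $p^{m(m+2)/(m+1)}$, giving the second case. Your alternative guess of a completeness-type argument for large iteration indices is not what is needed and would not produce this exponent.
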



\begin{proof} 
We have
\begin{eqnarray*}
\lefteqn{V_{\vec{a},c}(M,N)  =   \sum_{k,l=0}^{N-1} \eM(c (k-l))}\\
& &\qquad \qquad \sum_{{v_{0},\ldots, v_{m}\in \F_{p}}}
\ep \(\sum_{j=0}^{m-1} a_j\(f_j^{{(k)}}(v_0,\ldots,v_m) -
f_j^{{(l)}}(v_0,\ldots,v_m)\)\) \\
 && \quad \le \sum_{k,l=0}^{N-1} \left| \sum_{{v_{0},\ldots, v_{m}\in \F_{p}}} \ep \( \sum_{j=0}^
{m-1} a_j \(f_j^{{(k)}}(v_0,\ldots,v_m) -f_j^{{(l)}}(v_0,\ldots,v_m )\)\) \right|.
\end{eqnarray*}
For  $O(N)$ values of $k$ and $l$ which are equal,  we estimate  the inner sum 
trivially by $p^{m+1}$.


For the other values, 
by Lemma~\ref{lem:Elem}  getting the
upper bound $O(N^mp^m)$ for the inner sum
for at most $N^2$ sums.
Hence,
\begin{equation}
\label{eq:expsum}
V_{\vec{a},c}(M,N) \ll N p^{m+1} + N^{m+2} p^m.
\end{equation}
Because $\cF$ is a permutation  polynomial system and using~\eqref{eq:vecf}, for any integer $L$ we obtain 
\begin{eqnarray*}
\lefteqn{\sum_{{v_{0},\ldots, v_{m}\in \F_{p}}}
\left|\sum_{n=L}^{L+N-1}
\ep \(\sum_{j=0}^{m-1} a_jf_j^{{(n)}}(v_0,\ldots,v_m)\) \eM(c n)\right|^2}\\& & =
   \sum_{{v_{0},\ldots, v_{m}\in \F_{p}}} \\
   & &\qquad \ \left|\sum_{n=0}^{N-1} \ep \(\sum_{j=0}^{m-1}
a_j f_j^{{(n)}}\(f_0^{(L)}(v_0,\ldots,v_m),\ldots,f_m^{(L)}(v_0,\ldots,v_m)\)\) \eM(cn) \right|^2\\ & &= \sum_{v_{0},\ldots, v_{m}\in \F_{p}}
\left|\sum_{n=0}^{N-1}
\ep \(\sum_{j=0}^{m-1} a_j f_j^{(n)}(v_0,\ldots,v_m)\) \eM(c n)\right|^2= V_{\vec{a},c}(M,N).
\end{eqnarray*}
Therefore, for any positive integer $K\le N$, separating the inner sum into
at most $N/K + 1$ subsums of length at most $K$, and using~\eqref{eq:expsum}, we derive
$$
V_{\vec{a},c}(M,N) \ll (K p^{m+1} + K^{m+2} p^m)N^2K^{-2} = N^2(K^{-1} p^{m+1} + K^{m} p^m).
$$
Thus, selecting $K = \min\{N, \fl{p^{1/{(m+1)}}}\}$ and taking into account
that $ N^{-1} p^{m+1} \geq N^{m} p^m$ for $N \le p^{1/{(m+1)}}$, we obtain the desired result. 
\end{proof}
Note that the estimates for $V_{\vec{a},c}(M,N)$ work not only over prime fields, but also over any finite field.

We also need
the identity (see~\cite{IwKow})
\begin{equation}
\label{Ident}
\sum_{-(m-1)/2 \le a  \le m/2} \e_m(ab ) =
\left\{ \begin{array}{ll}
0& \mbox{if}\ b\not \equiv 0 \pmod m, \\
m& \mbox{if}\ b \equiv 0 \pmod m.
\end{array} \right.
\end{equation}
Then we have the following inequality
\begin{equation}
\label{Inequal}
\sum_{r=L+1}^{L+Q} \e_m(cr)
\ll \min\left\{Q,\frac{m}{|c|}\right\}
\ll  \min\left\{m,\frac{m}{|c|}\right\}
\ll \frac{m}{|c|+1}
\end{equation}
which holds for any integers $c$, $Q$ and $L$ with $|c|\leq m/2$, and $m \ge Q \ge 1$,
see~\cite[Bound~(8.6)]{IwKow}.
 
\subsection{Discrepancy}
\label{sec:discr}

Given a sequence $\Gamma$ of $N$ points 
\begin{equation}
\label{eq:GenSequence}
\Gamma = \left\{(\gamma_{n,0}, \ldots, \gamma_{n,s-1})_{n=0}^{N-1}\right\}
\end{equation}
in the $s$-dimensional unit cube $[0,1)^s$
it is natural to measure the level of its statistical uniformity 
in terms of the {\it discrepancy\/} $\Delta(\Gamma)$. 
More precisely, 
$$
\Delta(\Gamma) = \sup_{B \subseteq [0,1)^s}
\left|\frac{T_\Gamma(B)} {N} - |B|\right|,
$$
where $T_\Gamma(B)$ is the number of points of  $\Gamma$
inside the box
$$
B = [\alpha_1, \beta_1) \times \ldots \times [\alpha_{s}, \beta_{s})
\subseteq [0,1)^s
$$
and the supremum is taken over all such boxes, see~\cite{DrTi,KuNi}.

We recall that the discrepancy is a widely accepted 
quantitative measure 
of uniformity of distribution of sequences, and thus good pseudorandom
sequences should (after an appropriate scaling) have a small discrepancy,
see~\cite{Nied1,Nied2}.

For an integer vector $\va = (a_0, \ldots, a_{s-1}) \in \Z^s$ we put
\begin{equation*}
|\va| = \max_{j = 0, \ldots, s-1} |a_j|, \qquad
r(\va) = \prod_{j=0}^{s-1} \max\{|a_j|, 1\}.
\end{equation*}

Typically the bounds on the discrepancy of a 
sequence  are derived from bounds of exponential sums
with elements of this sequence. 
The relation is made explicit in 
 the celebrated {\it Erd\H os-Turan-Koksma
inequality\/}, see~\cite[Theorem~1.21]{DrTi},
which we  present in the following form.

\begin{lemma}
\label{lem:Kok-Szu} For any
integer $L > 1$ and any  sequence $\Gamma$ of $N$ points~\eqref{eq:GenSequence}
the discrepancy $\Delta(\Gamma)$
satisfies the following bound:
$$
\Delta(\Gamma)< O \( \frac{1}{L}
+ \frac{1}{N}\sum_{ 0 < |\va| \le L} {1 \over r(\va)}
\left| \sum_{n=0}^{N-1} \exp \( 2 \pi i\sum_{j=0}^{s-1}a_j\gamma_{n,j} \)
\right| \).
$$
\end{lemma}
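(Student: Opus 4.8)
The plan is to follow the classical trigonometric-approximation route behind the Erd\H{o}s--Tur\'an--Koksma (Koksma--Sz\"usz) inequality: approximate the indicator of an axis-parallel box $B$ by trigonometric polynomials of degree at most $L$ from above and below, then expand into Fourier series so that the constant term reproduces $|B|$ up to $O(1/L)$ and the higher frequencies produce exactly the weighted exponential sums in the statement. Write $\mathbf{1}_B$ for the indicator of $B=[\alpha_1,\beta_1)\times\cdots\times[\alpha_s,\beta_s)$ and $\gamma_n=(\gamma_{n,0},\ldots,\gamma_{n,s-1})$, so that
$$
\frac{T_\Gamma(B)}{N}-|B| \;=\; \frac1N\sum_{n=0}^{N-1}\mathbf{1}_B(\gamma_n)\;-\;\int_{[0,1)^s}\mathbf{1}_B(x)\,dx .
$$
Since $\mathbf{1}_B$ factors as $\prod_{j=0}^{s-1}\mathbf{1}_{I_j}$ into one-dimensional interval indicators $I_j=[\alpha_{j+1},\beta_{j+1})$ of length $\ell_j$, everything reduces to approximating each factor separately and then assembling a tensor product.

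First I would invoke the one-dimensional Beurling--Selberg/Vaaler construction: for each interval $I\subseteq[0,1)$ and each integer $L>1$ there are trigonometric polynomials $\psi^{\pm}$ with frequencies in $\{h:|h|\le L\}$ satisfying $\psi^{-}\le\mathbf{1}_I\le\psi^{+}$, with $\psi^{+}\ge0$, with zeroth Fourier coefficient $\widehat{\psi^{\pm}}(0)=\ell\pm\frac{1}{L+1}$, and with $|\widehat{\psi^{\pm}}(h)|\ll\min\{\ell,|h|^{-1}\}$ for $0<|h|\le L$. Taking the tensor products $\Psi^{\pm}(x)=\prod_{j=0}^{s-1}\psi_j^{\pm}(x_j)$ of the one-dimensional approximants yields trigonometric polynomials on $[0,1)^s$ whose Fourier coefficient at a frequency vector $\va=(a_0,\ldots,a_{s-1})$ is the product $\prod_{j}\widehat{\psi_j^{\pm}}(a_j)$. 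This product is precisely what manufactures the weight $1/r(\va)$: each nonzero entry contributes a factor $\ll|a_j|^{-1}$ while each zero entry contributes $\widehat{\psi_j}(0)=\ell_j\pm\frac{1}{L+1}\ll1$, so $\prod_j|\widehat{\psi_j^{\pm}}(a_j)|\ll 1/r(\va)$ (with an implied constant depending only on $s$, as permitted here).

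For the upper bound I would use $\Psi^{+}\ge\mathbf{1}_B$, which is immediate once every $\psi_j^{+}\ge0$, to get $\frac1N\sum_n\mathbf{1}_B(\gamma_n)\le\frac1N\sum_n\Psi^{+}(\gamma_n)$, and then expand the right-hand side into its (finite) Fourier series. The zeroth-frequency term equals $\prod_{j}\big(\ell_j+\frac{1}{L+1}\big)$, which differs from $|B|=\prod_j\ell_j$ by $O(1/L)$ after multiplying out, since every cross term carries at least one factor $\frac{1}{L+1}$ and at most a bounded product of lengths $\le1$. The remaining terms contribute
$$
\sum_{0<|\va|\le L}\Big(\prod_{j}\widehat{\psi_j^{+}}(a_j)\Big)\,\frac1N\sum_{n=0}^{N-1}\exp\!\Big(2\pi i\sum_{j=0}^{s-1}a_j\gamma_{n,j}\Big)\;\ll\;\frac1N\sum_{0<|\va|\le L}\frac{1}{r(\va)}\Big|\sum_{n=0}^{N-1}\exp\!\Big(2\pi i\sum_{j=0}^{s-1}a_j\gamma_{n,j}\Big)\Big|,
$$
by the coefficient bounds above. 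This yields the required one-sided estimate for $\frac{T_\Gamma(B)}{N}-|B|$.

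The step I expect to be the main obstacle is the lower bound, i.e.\ the sandwiching of a \emph{product} of approximants rather than the single-variable construction. Multiplying the inequalities $\psi_j^{-}\le\mathbf{1}_{I_j}\le\psi_j^{+}$ does not give $\prod_j\psi_j^{-}\le\mathbf{1}_B$ pointwise, because the minorants $\psi_j^{-}$ need not be nonnegative. The clean way around this is a hybrid/telescoping decomposition,
$$
\mathbf{1}_B-\prod_{j=0}^{s-1}\psi_j^{-}=\sum_{k=0}^{s-1}\Big(\prod_{j<k}\mathbf{1}_{I_j}\Big)\big(\mathbf{1}_{I_k}-\psi_k^{-}\big)\Big(\prod_{j>k}\psi_j^{-}\Big),
$$
an exact identity in which the single defect $\mathbf{1}_{I_k}-\psi_k^{-}\ge0$ is a trigonometric polynomial of degree $\le L$ with zeroth coefficient $\tfrac{1}{L+1}$ and small higher coefficients. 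The real work is to bound $\frac1N\sum_n$ of each summand: isolating the zero-frequency part of every factor produces a volume error $O(1/L)$ (again using $\ell_j\le1$), while the nonzero frequencies reassemble into the same weighted exponential sums as before. This bookkeeping — keeping the product structure under control as the minorants are replaced factor by factor — is where the argument is genuinely technical. Combining the resulting lower estimate with the upper estimate and taking the supremum over all boxes $B\subseteq[0,1)^s$ then gives the stated bound, with the $O(1/L)$ coming from the constant Fourier terms and the sum over $0<|\va|\le L$ supplying the exponential-sum contribution.
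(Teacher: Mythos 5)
A preliminary remark: the paper itself offers no proof of this lemma --- it is the classical Erd\H{o}s--Tur\'an--Koksma inequality, quoted directly from~\cite[Theorem~1.21]{DrTi} --- so your attempt can only be measured against the standard literature proof, and the Beurling--Selberg/Vaaler route you chose is indeed one of the standard ones. Your majorant half is sound: since $\psi_j^+\ge\mathbf{1}_{I_j}\ge 0$, the tensor product $\prod_j\psi_j^+$ genuinely majorizes $\mathbf{1}_B$, and the coefficient bounds produce the weight $1/r(\va)$. (One small inaccuracy: Vaaler's bound is $|\widehat{\psi^{\pm}}(h)|\le\frac{1}{L+1}+\min\{\ell,\frac{1}{\pi|h|}\}$ rather than $\ll\min\{\ell,|h|^{-1}\}$; for very short intervals the term $\frac{1}{L+1}$ dominates $\ell$. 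This is harmless, since all you use is $\ll 1/\max\{|h|,1\}$ for $|h|\le L$.)

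The genuine gap is in the minorant half, exactly at the step you defer to ``bookkeeping''. Your telescoping identity is exact, but its summands contain the raw indicators $\mathbf{1}_{I_j}$, $j<k$, which are \emph{not} trigonometric polynomials of degree at most $L$: their Fourier series are infinite, with coefficients decaying only like $1/|h|$, so it is not true that after ``isolating the zero-frequency part'' the remaining frequencies ``reassemble into the same weighted exponential sums'' over $0<|\va|\le L$; the truncation error is not controlled by the quantities in the statement. Nor can you eliminate those indicators by inserting $\psi_j^{\pm}$, because the adjacent factor $\prod_{j>k}\psi_j^-$ has no sign control, so no one-sided inequality survives the substitution --- this is the very obstruction you identified for the naive product of minorants, reappearing inside your proposed fix. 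The standard repair restructures the telescoping around the \emph{nonnegative majorants}: from
$$
\prod_j\psi_j^+-\mathbf{1}_B=\sum_k\Big(\prod_{j<k}\mathbf{1}_{I_j}\Big)\big(\psi_k^+-\mathbf{1}_{I_k}\big)\Big(\prod_{j>k}\psi_j^+\Big),
$$
bound $\prod_{j<k}\mathbf{1}_{I_j}\le 1$ and $0\le\psi_k^+-\mathbf{1}_{I_k}\le\psi_k^+-\psi_k^-$ (every retained factor being nonnegative), which gives the pointwise inequality $\mathbf{1}_B\ge\prod_j\psi_j^+-\sum_k(\psi_k^+-\psi_k^-)\prod_{j>k}\psi_j^+$. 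The right-hand side \emph{is} a trigonometric polynomial with frequencies $|\va|\le L$, constant term at least $|B|-O(1/L)$ (each $\psi_k^+-\psi_k^-$ has mean $\frac{2}{L+1}$), and coefficients $\ll 1/r(\va)$, which closes the lower bound. An equivalent device is to majorize $|\psi_j^-|$ by the nonnegative polynomial $2\psi_j^+-\psi_j^-$ inside your own identity. Without one of these ideas, the minorant direction of your proof does not go through as written.
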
 

Now, as in \cite{NiSh3}, combining Lemma~\ref{lem:Kok-Szu} with the bound 
obtained in Lemma~\ref{lem:expsum} we obtain stronger estimates for the discrepancy ``on average'' over all initial values.

\begin{theorem}
\label{thm:Discr} 
Let $0<\varepsilon<1$ and let the sequence $\{\vec{u}_n\}$ be given by~\eqref{eq:Gen}, where
the permutation system of $m+1$ polynomials 
$\cF=\{f_0,\ldots,f_m\} \in \F_p[X_{0},\ldots,X_{m}]$ 
of total degree $d \ge 2$ is  of the form~\eqref{eq:Polys}, 
satisfying the conditions~\eqref{eq:Cond1} and \eqref{eq:Cond2}, 
and such that $s_{0,1} \ldots s_{m-1,m} \ne 0$. Then for all initial values $\vec{v}\in \Fp^{m+1}$  except at most
$O(\varepsilon p^{m+1})$ of them, 
and any positive integer $N \le p^{m+1}$, 
the discrepancy $D_N(\vec{v})$ of the sequence~\eqref{eq:seq}
satisfies the bound 
$$
D_N(\vec{v})  \ll \varepsilon^{-1} B(N,p),
$$
where
$$
B(N,p) =
\left\{ \begin{array}{ll}
N^{-1/2}(\log N)^{m+1} \log p & \mbox{if}\ N \le p^{1/{(m+1)}}, \\
p^{-1/2(m+1)} (\log N)^{m+1} \log p & \mbox{if}\ N > p^{1/{(m+1)}}.
\end{array} \right.
$$
\end{theorem}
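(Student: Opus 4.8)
The plan is to bound the average discrepancy by first relating it to an averaged exponential sum via the Erd\H os-Turan-Koksma inequality, then exploiting the key fact from Lemma~\ref{lem:expsum} that the second-moment sum $V_{\vec{a},c}(M,N)$ does not depend on the initial value, because $\cF$ is a permutation system. First I would apply Lemma~\ref{lem:Kok-Szu} with $s=m$, $L$ to be chosen (of order a small power of $p$ or of $N$), and with the points being the scaled orbit $\gamma_{n,j}=u_{n,j}(\vec{v})/p$. This reduces $D_N(\vec{v})$ to a controlling term $1/L$ plus a weighted sum, over nonzero $\vec{a}$ with $|\vec{a}|\le L$, of the exponential sums $\left|\sum_{n=0}^{N-1}\e_p\(\sum_{j=0}^{m-1}a_jf_j^{(n)}(\vec{v})\)\right|$, each divided by $r(\vec{a})$.

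The crux of the ``on average'' argument is that these exponential sums are hard to control for a fixed $\vec{v}$, but easy to control in mean square. So the next step is to bound the discrepancy not pointwise but after summing (or rather, I would average) over all $\vec{v}\in\Fp^{m+1}$. Squaring the inner exponential sum and summing over $\vec{v}$ produces exactly $V_{\vec{a},0}(M,N)$ (the parameter $c$ enters only when one also wants to detect the index $n$, which is not needed here, so I would take $c=0$, or equivalently note the trivial $\e_M$ factor). By Lemma~\ref{lem:expsum} this is $\ll A(N,p)$ uniformly in $\vec{a}$. Therefore, by Cauchy--Schwarz applied to the weighted sum over $\vec{a}$, and using that $\sum_{0<|\vec{a}|\le L}r(\vec{a})^{-1}\ll(\log L)^{m}\ll(\log p)^{m}$, I would obtain a bound on the mean square of $D_N(\vec{v})$, or more precisely on
$$
\sum_{\vec{v}\in\Fp^{m+1}}\Bigl(\sum_{0<|\vec{a}|\le L}\frac{1}{r(\vec{a})}\Bigl|\sum_{n=0}^{N-1}\e_p\bigl(\textstyle\sum_j a_j f_j^{(n)}(\vec{v})\bigr)\Bigr|\Bigr)^{2}
\ll (\log p)^{2m}\, A(N,p)\, p\,,
$$
where the extra factor accounts for the $r(\vec{a})^{-1}$ weighting after Cauchy--Schwarz.

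From the mean-square bound I would pass to an ``all but $O(\varepsilon p^{m+1})$'' statement by a Chebyshev/Markov argument: the set of $\vec{v}$ for which the weighted exponential sum exceeds $\varepsilon^{-1}$ times its root-mean-square value has size at most $\varepsilon^{2}$ times the total, so discarding $O(\varepsilon p^{m+1})$ exceptional initial values leaves the remaining ones with the weighted exponential sum bounded by $\varepsilon^{-1}$ times the average. Substituting $A(N,p)$ and optimising the free parameter $L$ in the Erd\H os-Turan-Koksma term (balancing $1/L$ against the main term, which forces $L$ to be a suitable power of $N$ and produces the $(\log N)^{m+1}$ factor once the divisor sum is bounded by $(\log L)^{m+1}$) yields the two regimes of $B(N,p)$: when $N\le p^{1/(m+1)}$ the dominant contribution is $N^{-1/2}$ coming from $A(N,p)=Np^{m+1}$ divided by $N^2 p$, and when $N>p^{1/(m+1)}$ it is $p^{-1/2(m+1)}$ coming from $A(N,p)=N^2p^{m(m+2)/(m+1)}$.

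The main obstacle I expect is bookkeeping the logarithmic and $N$-dependent factors correctly through the Cauchy--Schwarz step: one must separate the diagonal contribution (the exceptional set) from the off-diagonal, track how the weight $r(\vec{a})^{-1}$ interacts with the uniform-in-$\vec{a}$ bound $A(N,p)$, and verify that the $\log p$ factor (from the range $|\vec{a}|\le L$ with $L$ a power of $p$) and the $(\log N)^{m+1}$ factor both emerge with the right exponents. A subtler point is justifying that the permutation property legitimately removes the dependence on $\vec{v}$ in $V_{\vec{a},c}(M,N)$ at the level needed here; this is exactly the shift-invariance computation carried out inside the proof of Lemma~\ref{lem:expsum}, so I would invoke that identity directly rather than reprove it. Everything else is routine once the mean-square bound and the parameter choice $L$ are in place.
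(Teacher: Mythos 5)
Your overall skeleton --- Erd\H{o}s--Tur\'an--Koksma, then averaging the resulting exponential sums over all initial values via $V_{\va,c}$ and Lemma~\ref{lem:expsum}, then a Chebyshev/Markov argument --- is the same as the paper's. But there is a genuine gap, located exactly at the point you dismiss: setting $c=0$ and declaring the twist $\eM(cn)$ unnecessary. The theorem asserts the existence of \emph{one} exceptional set of size $O(\varepsilon p^{m+1})$ that works \emph{simultaneously for every} $N \le p^{m+1}$. Your argument fixes $N$, averages, and applies Chebyshev, so it produces a different exceptional set for each $N$; since $N$ ranges over up to $p^{m+1}$ values, a union bound over $N$ is fatal, and one cannot repair this by proving the bound only at dyadic lengths, because comparing $D_N(\vec{v})$ with $D_{m_k}(\vec{v})$ for $m_{k-1}<N\le m_k$ costs an additive error of order $(m_k-N)/N$, which can be of order $1$. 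The paper's mechanism for uniformity in $N$ is the completion technique: with $m_k=2^k$ and $m_{k-1}<N\le m_k$, the identity~\eqref{Ident} and bound~\eqref{Inequal} give
$$
\Bigl|\sum_{n=0}^{N-1}\ep\Bigl(\sum_{j=0}^{m-1}a_ju_{n,j}(\vec{v})\Bigr)\Bigr|
\ \ll\ \sum_{|c|\le m_{k-1}}\frac{1}{|c|+1}\,
\Bigl|\sum_{n=0}^{m_k-1}\ep\Bigl(\sum_{j=0}^{m-1}a_ju_{n,j}(\vec{v})\Bigr)\eqk(cn)\Bigr|,
$$
so the quantity $\Delta_k(\vec{v})$ controlling all $N$ in the $k$-th dyadic window necessarily involves the \emph{twisted} complete sums; nonzero $c$ is indispensable, and this is precisely why Lemma~\ref{lem:expsum} carries the factor $\eM(cn)$. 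The exceptional set is then the union, over the $O(\log p)$ scales $k\le\rf{\log (p^{m+1})}$, of the sets where~\eqref{e10} holds; demanding that each such set have size $O(\varepsilon p^{m+1}/\log p)$ is the true source of the factor $\log p$ in $B(N,p)$ --- not, as you claim, the range $|\va|\le L$. Likewise, one of the $m+1$ logarithms in $(\log N)^{m+1}$ comes from the sum over $c$; the sum over $\va$ in $m$ dimensions only contributes $(\log N)^m$.

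Two smaller points. Your mean-square display carries an unexplained extra factor of $p$ (Cauchy--Schwarz in $\va$ plus Lemma~\ref{lem:expsum} gives $\ll(\log L)^{2m}A(N,p)$, with no $p$); the paper instead applies the Cauchy inequality only in $\vec{v}$ to each individual sum, $\sum_{\vec{v}}|S_{\va,c}(\vec{v})|\le p^{(m+1)/2}V_{\va,c}(m_k,m_k)^{1/2}\ll p^{(m+1)/2}A(m_k,p)^{1/2}$, and then uses a first-moment Markov bound on $\Delta_k$ rather than a second-moment Chebyshev bound (your variant also works per fixed scale, but does not by itself restore uniformity in $N$). Also, no optimisation over $L$ is needed or performed: the paper simply takes $L=\fl{N/2}$, and the two regimes of $B(N,p)$ are inherited directly from the two regimes of $A(N,p)$ in Lemma~\ref{lem:expsum}.
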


\begin{proof} Without loss of generality we can assume that $N \ge 2$.
 From Lemma~\ref{lem:Kok-Szu} with $G = \lfloor N/2 \rfloor$ we derive
$$
D_N(\vec{v}) \ll  {1 \over N}
+ \frac{1}{N}\sum_{ 0 < |\va| \le N/2} {1 \over r(\va)}
\left| \sum_{n=0}^{N-1} \ep \(\sum_{j=0}^{m-1}a_j u_{n,j}(\vec{v})\)
\right|.
$$
Let $m_\nu = 2^\nu$, $\nu =0, 1, \ldots$, and
define $k \ge 1$ by the condition $m_{k-1} <  N \le m_k$.
From~\eqref{Ident} we derive
\begin{eqnarray*}
\lefteqn{
\sum_{n=0}^{N-1} \ep\(\sum_{j=0}^{m-1}a_j u_{n,j}(\vec{v})\)}\\
& &=
\frac{1}{m_k} \sum_{n=0}^{m_k-1} \ep\(\sum_{j=0}^{m-1}a_j u_{n,j}(\vec{v})\)
\sum_{-(m_k-1)/2 \le c \le m_k/2} \sum_{r =0}^{N-1} \eqk(c(n-r)).
\end{eqnarray*}
Since $m_k/2 = m_{k-1}$, from~\eqref{Inequal} we obtain
\begin{eqnarray*}
\lefteqn{
\left| \sum_{n=0}^{N-1} \ep\(\sum_{j=0}^{m-1}a_j u_{n,j}(\vec{v})\)
\right|}\\
& &\ll   \sum_{|c|\le m_{k-1}} \frac{1}{|c|+1}
\left| \sum_{n=0}^{m_k-1} \ep\(\sum_{j=0}^{m-1}a_j u_{n,j}(\vec{v})\)
\eqk(cn)
\right|.
\end{eqnarray*}
It follows that
\begin{equation}
\label{Bound}
D_N(\vec{v}) \ll \Delta_k(\vec{v}),
\end{equation}
where
\begin{eqnarray*}
\Delta_k(\vec{\vec{v}}) &  = &
{1 \over N}
+ {1 \over m_k }\sum_{ 0 < |\va| \le m_{k-1}} {1 \over r(\va)}
 \sum_{|c|\le m_{k-1}}\frac{1}{|c|+1}\\
& & \qquad \qquad \qquad \qquad \cdot
\left| \sum_{n=0}^{m_k-1} \ep\(\sum_{j=0}^{m-1}a_j u_{n,j}(\vec{v})\)
\eqk(cn)\right|.
\end{eqnarray*}
Now
\begin{eqnarray*}
\lefteqn{\sum_{\vec{v} =({v_{0},\ldots, v_{m}) \in \F_{p}^{m+1}}} \Delta_k(\vec{v})  = \frac{p^{m+1}}{N}
+ {1 \over m_k }\sum_{ 0 < |\va| \le m_{k-1}} {1 \over r(\va)}
 \sum_{|c|\le m_{k-1}}  \frac{1}{|c|+1}}\\
& & \qquad \qquad \qquad \qquad \cdot
\sum_{{v_{0},\ldots, v_{m}\in \F_{p}}}
\left| \sum_{n=0}^{m_k-1} \ep\(\sum_{j=0}^{m-1} a_{j}f_j^{(n)}(v_0,\ldots,v_m)\)
\eqk(cn)\right|.
\end{eqnarray*}
Applying the Cauchy inequality, from Lemma~\ref{lem:expsum} we derive
$$
\sum_{{v_{0},\ldots, v_{m}\in \F_{p}}}
\left| \sum_{n=0}^{m_k-1} \ep\(\sum_{j=0}^{m-1} a_{j}f_j^{(n)}(v_0,\ldots,v_m)\)
\eqk(cn)\right| \ll p^{(m+1)/2} A(m_k,p)^{1/2}.
$$
Therefore
\begin{equation*}
\begin{split}
\sum_{{v_{0},\ldots, v_{m}\in \F_{p}}} 
\Delta_k(\vec{v})  
& \ll  \frac{p^{m+1}}{N} + {p^{(m+1)/2} A(m_k,p)^{1/2} \over m_k }\
\sum_{ 0 < |\va| \le m_{k-1}} {1\over r(\va)}\sum_{|c|<m_{k-1}}
\frac{1}{|c|+1}\\
& \ll   \frac{p^{(m+1)/2} A(m_k,p)^{1/2} (\log m_k)^{m+1}}{m_k },
\end{split}
\end{equation*}
where we used the standard bound for partial sums of the harmonic series
in the last step. Thus, for each $k = 1, \ldots, \rf{\log (p^{m+1})}$, the inequality
\begin{equation}
\label{e10}
\Delta_k(\vec{v})  
  \ge    { A(m_k,p)^{1/2} (\log m_k)^{m+1}
\log p \over \varepsilon m_k p^{(m+1)/2} }
  =  \varepsilon^{-1} B(m_k,p)
\end{equation}
can hold for at most $O(\varepsilon p^{m+1} / \log p)$
values of $v_{0},\ldots, v_{m}\in \F_{p}$. Therefore
the number of $v_{0},\ldots, v_{m}\in \F_{p}$ for which
\eqref{e10} holds for
at least one  $k = 1, \ldots, \rf{\log (p^{m+1})}$
is $O(\varepsilon p^{m+1})$. For all other $v_{0},\ldots, v_{m}$, 
we get from~\eqref{Bound},
$$
D_N(\vec{v}) \ll \Delta_k(\vec{v})
 < \varepsilon^{-1} B(m_k,p)  \ll
\varepsilon^{-1} B(N,p)
$$
for $1 \le N \le p^{m+1}$, where we used $m_k = 2m_{k-1} < 2N$
in the last step.
\end{proof}


\section{Remarks and Open Questions}

As we have mentioned, one of the attractive choices of polynomials~\eqref{eq:Polys}, 
which leads to a very fast 
pseudorandom number generator is
$$g_i(X_{i+1},\ldots,X_m) = X_{i+1}^2-a_i\qquad \text{and}
\qquad h_i(X_{i+1},\ldots,X_m) = b_{i}
$$
for some quadratic nonresidues $a_i$ and any constants $b_i$, $i = 0, \ldots, m-1$. 
The corresponding sequence of vectors is generated
at the cost of two multiplications per component. 
This naturally leads to a question of studying in what cases the 
periods of such sequences generated by such polynomial
dynamical systems are maximal.

We also note that it is natural to consider the 
joint distribution of several consecutive vectors
$$
\(\vec{u}_n(\vec{v}), \ldots, \vec{u}_{n+s-1}(\vec{v})\), 
\qquad n =0,1,\ldots\,,
$$
in the $sm$-dimensional space. It seems that the scheme used in~\cite{OS} 
can be also applied  to derive  such a result.

\section*{Acknowledgement}

The author would like to thank Igor Shparlinski for introducing the idea of using permutation polynomial systems in order to obtain better results and also
 Markus Brodmann, Joachim Rosenthal, Arne Winterhof and the anonymous referee for valuable comments and 
providing additional references.

The idea  of this work appeared during the
``Cryptography Retrospective Meeting'' at the  Fields Institute,
May, 2009,
which hospitality and financial support is gratefully  appreciated.

During the preparation of this paper,  
the author was also supported in part by 
the Swiss National Science Foundation   Grant 121874.


\begin{thebibliography}{99}



%


\bibitem{CW} A.~{\c C}e{\c s}melio{\v g}lu and  A.~Winterhof,
`On the average distribution of power residues and primitive elements in inversive and nonlinear recurring sequences ',
{\it Lect. Notes in Comp. Sci.\/}, Springer-Verlag,  
Berlin, {\bf 5203} (2008), 60--70.

\bibitem{DrTi} M. Drmota and R. Tichy, 
{\it Sequences, discrepancies and applications\/},
Springer-Verlag, Berlin, 1997.

\bibitem{EvWa} G. R. Everest  and  T. Ward,
{\it Heights of polynomials and entropy in algebraic
dynamics\/}, Springer-Verlag,  London, 1999. 

\bibitem{FrSh} J. B. Friedlander and
I.~E.~Shparlinski,  `On the
distribution of the power generator',
{\it Math. Comp.\/}, {\bf 70} (2001), 1575--1589.
  
%
  
  \bibitem{GoGuSh}   D.  Gomez-Perez, J. Gutierrez
and I. E. Shparlinski,
`Exponential sums with Dickson polynomials',
{\it Finite Fields Appl.\/}, {\bf 12} (2006),  16--25.
  
\bibitem{GNS} F.  Griffin, H. Niederreiter and  I.~E.~Shparlinski,
`On the  distribution
of nonlinear recursive congruential pseudorandom  numbers of higher orders',
{\it Lect. Notes in Comp. Sci.\/}, Springer-Verlag,  
Berlin, {\bf 1719} (1999), 87--93.


\bibitem{GG} J. Gutierrez and D. Gomez-Perez,
`Iterations of multivariate polynomials and discrepancy of pseudorandom numbers',
{\it Lect. Notes in Comp. Sci.\/}, Springer-Verlag,  
Berlin, {\bf 2227} (2001),  192--199.

\bibitem{GutWin} J. Gutierrez and A. Winterhof,
`Exponential sums of nonlinear congruential pseudorandom number generators
with Redei functions',
{\it Finite Fields Appl.\/}, {\bf 14} (2008), 410--416.


\bibitem{IwKow} H. Iwaniec and E. Kowalski,
{\it Analytic number theory\/}, Amer.  Math.  Soc.,
Providence, RI, 2004.

%


\bibitem{KuNi}
L.~Kuipers and H.~Niederreiter, {\it Uniform distribution of
sequences\/}, Wiley-Intersci., New York-London-Sydney, 1974.


\bibitem{LaWe} S. Lang and A. Weil, 
`Number of points of varieties in finite fields',
{\it  Amer. J. Math.\/}, {\bf 76} (1954), 819--827.

\bibitem{LN} R. Lidl and H. Niederreiter, 
`On orthogonal systems and permutation polynomials in several variables',
{\it  Acta Arith.\/}, {\bf 22} (1973), 257--265.

\bibitem{LN1} R. Lidl and H. Niederreiter, {\it Finite fields\/},
Cambridge University Press, Cambridge, 1997.

\bibitem{Nied1} H. Niederreiter, `Quasi-Monte Carlo methods and pseudo-random
numbers', {\it Bull. Amer. Math. Soc.\/}, {\bf 84} (1978), 957--1041.

\bibitem{Nied2}
H. Niederreiter, {\it Random number
generation and Quasi--Monte Carlo methods\/},  SIAM Press, 1992.

\bibitem{NiSh3}
H. Niederreiter and I. E. Shparlinski,
`On the average distribution of inversive
pseudorandom numbers',
{\it Finite Fields and Their Appl.\/},  
{\bf 8} (2002), 491--503.

\bibitem{NiSh4} H. Niederreiter and I. E. Shparlinski,
`Dynamical systems generated by rational functions',
{\it Lect. Notes in Comp. Sci.\/}, Springer-Verlag,  
Berlin, {\bf 2643} (2003), 6--17.




\bibitem{NiWi}
H. Niederreiter and A. Winterhof,  
`Exponential sums for nonlinear recurring sequences', 
{\it Finite Fields Appl.\/}, {\bf 14} (2008),   59--64. 

\bibitem{OS}
A. Ostafe and I.~E.~Shparlinski,  
`On the degree growth in some polynomial dynamical
systems and nonlinear pseudorandom number generators',
{\it Math. Comp.\/} 79 (2010) 501-511.

\bibitem{Sch}
W. M. Schmidt,  
`A lower bound for the number of solutions of equations over finite fields', 
{\it J. Number Theory\/}, {\bf 6} (1974),   448--480. 

\bibitem{Silv1}
J. H. Silverman, {\it The arithmetic of dynamical systems\/},
Springer, New York, 2007.

\bibitem{TopWin} A. Topuzo{\v g}lu and
A. Winterhof, `Pseudorandom  sequences', {\it Topics in Geometry,
Coding Theory and Cryptography\/}, Springer-Verlag, 2006, 135--166.

\end{thebibliography}
\end{document}